\theoremstyle{plain}
\newtheorem{thm}{Theorem}[section]
\newtheorem{prop}[thm]{Proposition}
\newtheorem{lemma}[thm]{Lemma}
\newtheorem{cor}[thm]{Corollary}
\theoremstyle{definition}
\newtheorem{defn}[thm]{Definition}
\newtheorem*{defn*}{Definition}
\newtheorem{example}[thm]{Example}
\newtheorem*{example*}{Example}
\theoremstyle{remark}
\newcommand{\field}[1]{\mathbb{#1}}
\newcommand{\N}{\field{N}}
\newcommand{\Z}{\field{Z}}
\newcommand{\ideal}[1]{\mathfrak{#1}}
\newcommand{\m}{\ideal{m}}
\DeclareMathOperator{\pdeg}{pdeg}
\renewcommand{\phi}{\varphi}
\author{Neil Epstein}
\address{Department of Mathematical Sciences \\ George Mason University \\ Fairfax, VA  22030}
\email{nepstei2@gmu.edu}
\author{Jay Shapiro}
\address{Department of Mathematical Sciences \\ George Mason University \\ Fairfax, VA  22030}
\email{jshapiro@gmu.edu}
\title{A Dedekind-Mertens theorem for power series rings}
\subjclass[2010]{13F25, 13A15}
\keywords{Dedekind-Mertens lemma, content of a power series}
\date{October 8, 2014}
\begin{document}
\begin{abstract}
We prove a power series ring analogue of the Dedekind-Mertens lemma.  Along the way, we give limiting counterexamples, we note an application to integrality, and we correct an error in the literature.
\end{abstract}

\maketitle

\section{Introduction}
Let $R$ be a commutative ring, and let $S=R[X]$.  For $f\in S$, the \emph{content} of $f$, written $c(f)$, is the ideal of $R$ generated by the coefficients of $f$.

One form of a lemma of Gauss states that if $R=\Z$ and $S=\Z[X]$, then for any $f, g \in S$, one has $c(fg) = c(f)c(g)$.  This result and its avatars are useful for instance in extending the property of unique factorization to polynomial rings.  However, in this form, Gauss's lemma is not true for general commutative rings $R$.  Indeed, if $R$ is an integral domain, it's only true if $R$ is a Pr\"ufer domain (which in the Noetherian case means that $R$ is a Dedekind domain).  For this and more on Gauss's lemma in rings with zero-divisors, see the survey article~\cite{GlSc-Pruefer}.

The Dedekind-Mertens lemma (proved by Dedekind \cite{Ded-DM} and independently in a weaker form by Mertens \cite{Mer-DM}, both in 1892) is a generalization of Gauss's lemma that works for all commutative rings.  Namely, they show that $c(f)^k c(g) = c(f)^{k-1} c(fg)$ for some $k \in \N$ (where in Dedekind's case, $k=1+\deg g$).  Of course, the general notion of ring did not exist in the 1890's, so Dedekind and Mertens were assuming $R$ to be the ring of integers of a number field, but Dedekind's proof works over any commutative ring, as verified by Pr\"ufer in 1932 \cite[p.24]{Pr-DM}.  See \cite[footnote 1]{HeiHu-content} for a brief discussion.  Heinzer and Huneke \cite[main theorem]{HeiHu-content} show that one may improve the Dedekind-Mertens lemma by letting $k$ be the locally minimal number of generators of $c(g)$.  For more history and context regarding the many forms of the Dedekind-Mertens Lemma, see \cite[Section 8]{An-gcdsurvey}.

It is natural to ask what happens when one replaces the polynomial ring over $R$ with a power series ring in one variable.  Is there an integer $k$ that works in this case?  So let $S=R[\![X]\!]$ for the remainder of this discussion.  There too, one may define the content of a power series to be the ideal generated by the coefficients.

Gilmer, Grams and Parker \cite[Theorem 3.6]{GGP-power} showed that in this case (and more generally, for greater sets of power series variables) if $g$ is a polynomial of degree $k-1$, then even when $f$ is a power series, one has $c(f)^k c(g) = c(f)^{k-1} c(fg)$.  On the other hand, Rush \cite{Ru-content} claims to exhibit a counterexample when $f$, $g$ are both power series of infinite degree, even when $R$ is a two-dimensional polynomial ring over a field.  Rush's counterexample is wrong, however.  Indeed, in our main theorem, we show that the desired generalization holds whenever $R$ is a Noetherian ring.  Moreover, the exponent $k$ that we use is the locally minimal number of generators of $c(g)$, as in the Heinzer-Huneke version.

In the last section, we show what is wrong with Rush's counterexample and we give an example where $R$ is non-Noetherian to show that at that level of generality, there may be no $k$ such that $c(f)^k c(g) = c(f)^{k-1} c(fg)$.

In the proof of our main theorem, we follow very closely the structure of the proof of the main theorem from \cite{HeiHu-content}.

\section{Results}

The first result we will need was proved in Hwa Tsang's Ph.D. thesis \cite[Chapter II, Lemma 1.2]{Ts-Gauss} in a more general form.  Namely, she proved the result without assuming $R$ is Noetherian, but only that $c(f)=R$ and that the ideal $c(g)$ is finitely generated.  However, our proof is completely different and may be of independent interest, so we include it below.

\begin{prop}\label{pr:pssemi}
Let $R$ be a Noetherian ring, and let $f, g \in R[\![X]\!]$ such that $c(f) = R$.  Then $c(fg) = c(g)$.
\end{prop}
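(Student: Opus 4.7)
The inclusion $c(fg)\subseteq c(g)$ is immediate because every coefficient of $fg$ is an $R$-linear combination of coefficients of $g$. For the reverse inclusion, set $I:=c(fg)$ and pass to the Noetherian quotient $\bar R:=R/I$: modulo $I$, the product $\bar f\,\bar g$ is zero while $c(\bar f)=\bar R$. The proposition thus reduces to the key claim that \emph{if $R$ is Noetherian and $c(f)=R$, then $f$ is a non-zero-divisor in $R[\![X]\!]$}; indeed, applying this claim to $\bar R$ yields $\bar g=0$, i.e.\ $c(g)\subseteq I=c(fg)$.

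To prove the key claim, I would first reduce to the local case. The natural map $R\to\prod_{\m\in\Max R}R_{\m}$ is injective, so if $fg=0$ in $R[\![X]\!]$ and $g$ becomes zero in each $R_{\m}[\![X]\!]$, then $g=0$. Since localization preserves Noetherianness and the condition $c(f)=R$, I may assume $(R,\m)$ is Noetherian local with some coefficient $a_n$ of $f$ a unit; choose $n$ minimal with this property. Then $a_0,\ldots,a_{n-1}\in\m$, while the tail $u(X):=a_n+a_{n+1}X+\cdots$ is a unit in $R[\![X]\!]$, which yields the factorization
\[
f=u(X)\bigl(X^n+q(X)\bigr),
\]
where $q(X):=u(X)^{-1}(a_0+a_1X+\cdots+a_{n-1}X^{n-1})\in\m R[\![X]\!]$.

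It therefore suffices to show $X^n+q(X)$ is a non-zero-divisor. Suppose $(X^n+q)g=0$, so that $X^n g=-qg$. I claim by induction on $k\ge 0$ that $g\in\m^k R[\![X]\!]$. The base case $k=0$ is trivial; for the inductive step, if every coefficient of $g$ lies in $\m^k$, then every coefficient of $qg$ lies in $\m^{k+1}$, and since the coefficients of $X^n g$ are just those of $g$ shifted, every coefficient of $g$ lies in $\m^{k+1}$ as well. Consequently every coefficient of $g$ belongs to $\bigcap_k\m^k=(0)$ by Krull's intersection theorem, so $g=0$. The main obstacle is recognizing the right factorization $f=u(X^n+q)$ with $q\in\m R[\![X]\!]$ so that Krull's intersection theorem can be brought to bear; once that is in place, the rest is a routine induction and the global-to-local and quotient reductions are formal.
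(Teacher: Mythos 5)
Your proof is correct, but it takes a genuinely different route from the paper. The paper takes a primary decomposition $c(fg)=Q_1\cap\cdots\cap Q_k$ and invokes a theorem of Brewer that each $Q_iR[\![X]\!]$ is again primary; since $f\notin\sqrt{Q_iR[\![X]\!]}$ (as $c(f)=R$) while $fg\in Q_iR[\![X]\!]$, it concludes $g\in Q_iR[\![X]\!]$ for every $i$, hence $c(g)\subseteq c(fg)$. You instead pass to $\bar R=R/c(fg)$ and reduce everything to the statement that a power series of unit content over a Noetherian ring is a non-zero-divisor, which you prove by localizing, extracting the Weierstrass-type factorization $f=u\cdot(X^n+q)$ with $q\in\m R[\![X]\!]$, and running a Krull-intersection argument. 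Both arguments check out; all the small points in yours (that $\overline{fg}=\bar f\bar g=0$ in $(R/c(fg))[\![X]\!]$, that unit content survives passage to quotients and localizations, that $g\mapsto 0$ in every $R_\m[\![X]\!]$ forces $g=0$, and that the shift identity $X^ng=-qg$ pushes all coefficients of $g$ into $\m^{k+1}$) are sound. What your route buys is self-containedness: you avoid citing Brewer's primary-extension theorem, and you establish along the way the independently useful fact (a special case of a theorem of Fields on zero-divisors in power series rings) that unit content implies non-zero-divisor over a Noetherian base. What the paper's route buys is brevity given the citation, and it localizes the use of the Noetherian hypothesis in the existence of a primary decomposition, whereas yours uses it through the Krull intersection theorem in the quotient $\bar R$. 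Neither argument recovers Tsang's more general version (non-Noetherian $R$ with $c(g)$ finitely generated), so on that score they are comparable.
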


\begin{proof}
It is clear that $c(fg) \subseteq c(f)c(g) = c(g)$.  Since $R$ is Noetherian, we may take a primary decomposition of $c(fg)$, namely: \[
c(fg) = Q_1 \cap \cdots \cap Q_k,
\]
where each $Q_i$ is  primary.  For each $i$, we have $fg \in Q_i R[\![X]\!]$, but $f \notin \sqrt{Q_i R[\![X]\!]}$ since $c(f)=R$.  However, it is shown in \cite[Theorem 8]{Br-psbook} that $Q_i R[\![X]\!]$ is primary in this context.  Hence, it follows that $g \in Q_i R[\![X]\!]$, whence $c(g) \subseteq Q_i$.  Since this holds for all $1\leq i \leq k$, it follows that $c(g) \subseteq c(fg)$, as was to be shown.
\end{proof}

\begin{lemma}\label{lem:pscoeff}
Let $R$ be a Noetherian ring, and let $f \in R[\![X]\!]$.  It is possible to write $f$ in the form \[
f = \sum_{j=0}^n a_j u_j X^j,
\]
where $a_j \in R$ and $u_j$ is a unit of $R[\![X]\!]$ for each $0 \leq j \leq n$.  For any such representation of $f$, we have $c(f) = (a_0, \dotsc, a_n)$.
\end{lemma}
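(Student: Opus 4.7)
The plan is to handle existence and the content identity separately. For existence, write $f = \sum_{i \geq 0} c_i X^i$; because $R$ is Noetherian, the ascending chain $(c_0) \subseteq (c_0, c_1) \subseteq \cdots$ stabilizes, giving an integer $N$ with $c(f) = (c_0, \ldots, c_N)$. For each $j > N$ we may then write $c_j = \sum_{l=0}^N r_{l,j} c_l$ with $r_{l,j} \in R$, and substituting into $f$ and regrouping by $c_l$ yields
\[
f = \sum_{l=0}^N c_l X^l \Bigl( 1 + \sum_{j > N} r_{l,j} X^{j-l} \Bigr).
\]
In the inner sum every exponent $j-l$ is positive (since $j > N \geq l$), so each parenthesised factor has constant term $1$ and is a unit of $R[\![X]\!]$, producing the desired representation with $a_l = c_l$.

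For the content identity, consider any representation $f = \sum_{j=0}^n a_j u_j X^j$ and expand each $u_j = \sum_k v_{j,k} X^k$, noting that $v_{j,0}$ is a unit of $R$. The coefficient of $X^m$ in $f$ equals $\sum_{j=0}^{\min(m,n)} a_j v_{j,m-j}$, an $R$-linear combination of $a_0, \ldots, a_n$, giving $c(f) \subseteq (a_0, \ldots, a_n)$. For the reverse inclusion I would reduce modulo $c(f)$: the image $\bar f$ vanishes in $(R/c(f))[\![X]\!]$, while each $\bar u_j$ remains a unit there, since $v_{j,0}$ reduces to a unit. Comparing coefficients of $X^m$ in the identity $\sum_j \bar a_j \bar u_j X^j = 0$ for $m = 0, 1, \ldots, n$ yields
\[
\bar a_m \bar v_{m,0} + \sum_{j < m} \bar a_j \bar v_{j, m-j} = 0,
\]
and induction on $m$ forces $\bar a_m = 0$, hence $a_m \in c(f)$.

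The main obstacle is this reverse inclusion: the crucial observation is that reduction modulo $c(f)$ preserves the unit-ness of the constant terms of the $u_j$, which is exactly what powers the induction. The existence half is by contrast a routine application of the ascending chain condition together with a regrouping of terms.
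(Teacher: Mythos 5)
Your proposal is correct and follows essentially the same route as the paper: the existence half is the identical ACC-plus-regrouping argument, and your reverse inclusion, though phrased as vanishing in $(R/c(f))[\![X]\!]$, is the same induction on the coefficient index hinging on the constant term $v_{j,0}$ of each $u_j$ being a unit. The paper simply runs that induction directly in $R$ (showing $a_j u_{j,0}\in c(f)$, hence $a_j\in c(f)$) rather than in the quotient.
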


\begin{proof}
For the first part, write $f = \sum_{i=0}^\infty a_i X^i$, with $a_i \in R$.  The ideals $(a_0) \subseteq (a_0, a_1) \subseteq \cdots$ form an ascending chain, so since $R$ is Noetherian, there must be some $n$ such that for all $i> n$, $a_i \in (a_0, a_1, \dotsc, a_n)$.  In particular, for any such $i$, we may write \[
a_i = \sum_{j=0}^n r_{ij} a_j.
\]
Then we have \begin{align*}
f &= \left(\sum_{j=0}^n a_j X^j\right) + \sum_{i=n+1}^\infty a_i X^i\\
&= \left(\sum_{j=0}^n a_j X^j\right) + \sum_{i=n+1}^\infty \left(\sum_{j=0}^n r_{ij} a_j\right) X^i \\
&= \sum_{j=0}^n a_j \left(1 + \sum_{i=n+1}^\infty r_{ij}X^{i-j}\right) X^j.
\end{align*}
Setting $u_j := 1 + \sum_{i=n+1}^\infty r_{ij} X^{i-j}$ for each $0\leq j \leq n$, we have represented $f$ in the required form.

For any such representation, clearly $c(f) \subseteq (a_0, \dotsc, a_n)$ by subadditivity of the content function.  For the reverse inclusion, we show by induction on $j$, where $0 \leq j \leq n$, that $a_0, \ldots, a_j \in c(f)$.  We use the vacuous case as the base of the induction.  
So let $0 \leq j \leq n$, and suppose we have shown that $a_0, \dotsc, a_{j-1} \in c(f)$.  Write $u_i = \sum_{k=0}^\infty u_{i,k} X^k$, $u_{i,k} \in R$.  Then the $R$-coefficient of $X^j$ is \[
\sum_{i=0}^j a_i u_{i, j-i} \in c(f).
\]
But since each $a_i$ is in $c(f)$ for $i<j$, it follows that $a_j u_{j,0} \in c(f)$.  But $u_{j,0}$ is a unit, so $a_j \in c(f)$.
\end{proof}

Inspired by the above lemma, we make the following definition:

\begin{defn}
Let $f \in R[\![X]\!]$.  Then the \emph{pseudodegree} of $f$, written $\pdeg f$, is the smallest nonnegative integer $k$ such that there exists a representation of $f$ in the form \[
f=\sum_{j=0}^k a_j u_j X^j,
\]
where each $a_j \in R$ and each $u_j$ is a unit of $R[\![X]\!]$.
\end{defn}

Next we prove a lemma that is comparable to \cite[Lemma 2.4]{HeiHu-content}.

\begin{lemma}\label{lem:mingen}
Let $(R,\m)$ be a Noetherian local ring.  Let $f, g \in R[\![X]\!]$, $i \in \N$, $b \in \m c(g)$, and $h := g+buX^i$, where  $u \in R[\![X]\!]$ is a unit.  Then $c(g)=c(h)$, and for any ideal $J$ of $R$ such that $J c(f)c(h) = J c(fh)$, one has $J c(f) c(g) = J c(fg)$.
\end{lemma}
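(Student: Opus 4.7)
The plan is to establish the equality $c(g) = c(h)$ first using Nakayama's lemma, then exploit that equality together with a parallel Nakayama argument to deduce the second assertion.

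For $c(g) = c(h)$: Since $u$ is a unit of $R[\![X]\!]$, its constant term is a unit of $R$, so $c(u) = R$. Writing $u = \sum u_k X^k$, I get $buX^i = \sum_k bu_k X^{k+i}$, from which $c(buX^i) = b \cdot c(u) = (b)$. Thus $c(h) \subseteq c(g) + (b) \subseteq c(g)$ (because $b \in \m c(g) \subseteq c(g)$), and $c(g) \subseteq c(h) + (b) \subseteq c(h) + \m c(g)$. Since $R$ is Noetherian, $c(g)$ is finitely generated, and Nakayama's lemma gives $c(g) = c(h)$.

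For the second assertion, I would first rewrite the hypothesis $Jc(f)c(h) = Jc(fh)$ as $Jc(f)c(g) = Jc(fh)$ using the first part. The goal is then $Jc(fh) = Jc(fg)$. Since $fh = fg + bX^i (fu)$, I have
\[
c(fh) \subseteq c(fg) + c(bX^i \cdot fu) = c(fg) + b \cdot c(fu).
\]
Because $u$ is a unit, Proposition~\ref{pr:pssemi} yields $c(fu) = c(f)$, so $c(fh) \subseteq c(fg) + b\, c(f)$. Similarly $c(fg) \subseteq c(fh) + b\, c(f)$. Using $b \in \m c(g)$, we get $b\, c(f) \subseteq \m c(f)c(g)$, and therefore
\[
Jc(f)c(g) = Jc(fh) \subseteq Jc(fg) + J\m c(f)c(g) \subseteq Jc(fg) + \m\bigl(Jc(f)c(g)\bigr).
\]

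Finally, the containment $Jc(fg) \subseteq Jc(f)c(g)$ is automatic, so the quotient $M := Jc(f)c(g)/Jc(fg)$ is a finitely generated $R$-module (since $R$ is Noetherian and local) satisfying $M = \m M$. Nakayama's lemma gives $M = 0$, i.e.\ $Jc(f)c(g) = Jc(fg)$, as required. The main subtlety is making sure both applications of Nakayama are to finitely generated modules, which is guaranteed by the Noetherian hypothesis on $R$; the computation $c(fu) = c(f)$ via Proposition~\ref{pr:pssemi} is the only non-elementary ingredient beyond that.
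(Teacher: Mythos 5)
Your proof is correct and follows essentially the same route as the paper's: bound $c(h)$ and $c(g)$ against each other modulo $\m c(g)$ and apply Nakayama to $c(g)/c(h)$, then use $fh = fg + buX^if$ together with $c(uf)=c(f)$ (Proposition~\ref{pr:pssemi}) to get $Jc(f)c(g) \subseteq Jc(fg) + \m\, Jc(f)c(g)$ and apply Nakayama again. The only cosmetic differences are that you compute $c(buX^i)=(b)$ directly rather than via Proposition~\ref{pr:pssemi}, and you state an unneeded reverse containment $c(fg)\subseteq c(fh)+b\,c(f)$.
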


\begin{proof}
For the first part, first note that by subadditivity, we have $c(h) \subseteq c(g) + c(buX^i) \subseteq c(g) + bR = c(g)$ (since $b \in c(g)$).  Then we have $c(g) \subseteq c(h) + c(buX^i) =$ (by Proposition~\ref{pr:pssemi}) $c(h) + c(b) \subseteq c(h) + \m c(g)$.  Finally, the desired the equality follows from the Nakayama lemma applied to the quotient module $c(g) / c(h)$.

For the second part, we have \begin{align*}
J c(f)c(g) &= J c(f) c(h) = J c(fh) = J c(fg + buX^if) \\
&\subseteq J c(fg) + \m J c(f) c(g),
\end{align*}
and then the result follows from the Nakayama lemma applied to the quotient module $\frac{J c(f)c(g)}{J c(fg)}$.
\end{proof}

The following lemma may be well known, but we could not find a reference to it, so we prove it here for the convenience of the reader. Here $\mu(I)$ denotes the minimal number of generators of an ideal $I$.

\begin{lemma}\label{lem:ginduction}
Let $(R,\m)$ be a local ring.  Let $I$ be a finitely generated ideal with $\mu(I) = k\geq 2$.  Say $I = (c_1, \dotsc, c_k)$, and let $J$ be a finitely generated ideal with $J \subseteq (c_1, \dotsc, c_{k-1})$ and $J + (c_k) = I$.  Then $J = (c_1, \dotsc, c_{k-1})$.
\end{lemma}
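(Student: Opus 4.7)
The plan is to exploit the hypothesis $\mu(I) = k$, which forces $\{c_1, \dotsc, c_k\}$ to be a minimal generating set of $I$; equivalently, their images form an $R/\m$-basis of $I/\m I$. Writing $K := (c_1, \dotsc, c_{k-1})$ for brevity, the rigidity of this basis will let us encode the desired containment $K \subseteq J$ as a square linear system over $R$ whose coefficient matrix is invertible.

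For each $i \in \{1, \dotsc, k-1\}$, since $c_i \in I = J + (c_k)$, we may write $c_i = j_i + s_i c_k$ with $j_i \in J$ and $s_i \in R$. Because $j_i \in J \subseteq K$, we can further expand $j_i = \sum_{l=1}^{k-1} a_{il} c_l$ for suitable $a_{il} \in R$. Substituting yields the identity
\[
c_i - \sum_{l=1}^{k-1} a_{il} c_l - s_i c_k = 0
\]
inside $I$. Passing to $I/\m I$ and comparing coefficients against the basis coming from $c_1, \dotsc, c_k$ forces $a_{il} \equiv \delta_{il} \pmod{\m}$ and $s_i \in \m$.

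Hence the $(k-1) \times (k-1)$ matrix $A = (a_{il})$ is congruent to the identity modulo $\m$, so $\det A \in 1 + \m$ is a unit of the local ring $R$, and $A$ is invertible in $M_{k-1}(R)$. Inverting the matrix equation $(j_1, \dotsc, j_{k-1})^T = A \cdot (c_1, \dotsc, c_{k-1})^T$ expresses each $c_i$ with $1 \leq i \leq k-1$ as an $R$-linear combination of $j_1, \dotsc, j_{k-1} \in J$, giving $c_i \in J$. This yields $K \subseteq J$, and combined with the hypothesis $J \subseteq K$, we conclude $J = K$.

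The main obstacle is simply identifying the right framing: the observation that the coefficients $a_{il}$ are pinned down by the minimality of $\{c_1, \dotsc, c_k\}$ to form a matrix that is unipotent modulo $\m$, and therefore invertible over $R$. Once this is noticed, the rest is a routine determinant argument and direct matrix inversion.
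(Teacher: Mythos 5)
Your proof is correct and follows essentially the same route as the paper: both write $c_i = j_i + s_i c_k$ with $j_i \in J$, expand $j_i$ in terms of $c_1,\dotsc,c_{k-1}$, use minimality of the generating set (i.e.\ linear independence in $I/\m I$) to see that the resulting $(k-1)\times(k-1)$ coefficient matrix is congruent to the identity modulo $\m$, and conclude via a unit determinant that each $c_i$ lies in $J$. The only cosmetic difference is that the paper invokes Cramer's rule and expands the determinant by hand, whereas you observe directly that $\det A \in 1+\m$ and invert the matrix; the substance is identical.
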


\begin{proof}
For each $1\leq i \leq {k-1}$, there exist $a_i \in R$ and $x_i \in J$ such that \[
c_i = a_i c_k + x_i.
\]
On the other hand, since $J \subseteq (c_1, \dotsc, c_{k-1})$, we may write \[
x_i = \sum_{j=1}^{k-1} r_{ij} c_j \in J.
\]
Thus, we have \[
0 = a_i c_k + (r_{ii} - 1) c_i + \sum_{\stackrel{2\leq j \leq k}{j \neq i}} r_{ij} c_j.
\]
Since $c_1, \dotsc, c_k$ are a minimal generating set for $I$, it follows that each of the coefficients of the $c_j$s in the above equation are in $\m$.  That is, $a_i \in \m$, $r_{ij} \in \m$ whenever $i \neq j$, and $r_{ii}$ is a \emph{unit}.

Let $M$ be the $(k-1) \times (k-1)$ matrix whose entry in the $(i,j)$ spot is $r_{ij}$.  Then all the entries of the vector \[
M \cdot \left(\begin{matrix} c_1 \\ \vdots \\ c_{k-1}\end{matrix} \right)
\]
are in $J$.  It follows from Cramer's rule that for each $1\leq i<k$, $\det(M)c_i \in J$.  But the classical expansion of $\det(M)$ looks like \[
r_{11} r_{22} \cdots r_{k-1,k-1} + \text{(other terms)},
\]
where each summand in the ``other terms'' part is a multiple of some $r_{ij}$ with $i\neq j$.  Hence, the other terms are all in $\m$.  But the first summand is a product of units of $R$, hence a unit, so that $\det(M)$ is itself a unit of $R$.  Thus, $c_i \in J$ for each $1\leq i\leq k-1$, finishing the proof.
\end{proof}

\begin{thm}\label{mainthm}
Let $R$ be a Noetherian ring, and let $0 \neq g\in R[\![X]\!]$.  Let $k$ be the maximum of the numbers $\mu(c(g)_\m)$, taken over all maximal ideals $\m$ of $R$.  (In particular, $\mu(c(g)) \geq k$.)  Then for all $f \in R[\![X]\!]$, we have \begin{equation}\label{eqn:ceq}
c(f)^k c(g) = c(f)^{k-1} c(fg).
\end{equation}
\end{thm}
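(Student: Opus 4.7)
The inclusion $c(f)^{k-1}c(fg) \subseteq c(f)^k c(g)$ is immediate from $c(fg) \subseteq c(f)c(g)$, so I concentrate on the reverse inclusion. First I localize: since content and products of ideals commute with localization, it suffices to check the equality after passing to $R_\m$ for each maximal $\m$. There $\mu(c(g)_\m) \leq k$, and multiplying both sides of the local version (with $k$ replaced by $\mu(c(g)_\m)$) by $c(f)_\m^{k-\mu(c(g)_\m)}$ recovers the desired equality. So I may assume $(R,\m)$ is Noetherian local and $k = \mu(c(g))$.

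I then induct on $k$. When $k=1$, $c(g)=(a)$ is principal; since every coefficient of $g$ is a multiple of $a$, we may write $g = ah$ with $h \in R[\![X]\!]$, whence $a\,c(h) = (a)$. Nakayama's lemma applied to the nonzero finitely generated ideal $(a)$ rules out $(a)=\m(a)$, so $c(h)\not\subseteq\m$, i.e.\ $c(h)=R$. By Proposition~\ref{pr:pssemi}, $c(fh)=c(f)$, whence $c(fg)=a\,c(fh)=a\,c(f)=c(g)c(f)$.

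For $k \geq 2$, I adapt the polynomial argument of Heinzer-Huneke, using Lemma~\ref{lem:pscoeff} to supply a finite ``pseudo-polynomial'' structure in place of polynomial degree. Write $g = \sum_{j=0}^n a_j u_j X^j$ with $c(g)=(a_0,\ldots,a_n)$, and choose $j_1<\cdots<j_k$ so that $\{a_{j_1},\ldots,a_{j_k}\}$ minimally generates $c(g)$. For $j\notin\{j_i\}$, the relation $a_j = \sum_l r_{jl}a_{j_l}$ (valid since $a_j \in c(g)$) lets us regroup as $g = \sum_{l=1}^k a_{j_l}w_l$ with $w_l := u_{j_l}X^{j_l} + \sum_{j\notin\{j_i\}} r_{jl}u_j X^j$. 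Since the coefficient of $u_{j_l}X^{j_l}$ in $w_l$ equals $1$, Lemma~\ref{lem:pscoeff} yields $c(w_l)=R$, and then Proposition~\ref{pr:pssemi} gives $c(fw_l)=c(f)$. For each $l$, the series $g^{*,l}:=g-a_{j_l}w_l=\sum_{m\neq l}a_{j_m}w_m$, when re-expanded as $\sum_j a^*_j u_j X^j$, has $a^*_{j_m}=a_{j_m}$ for $m\neq l$, $a^*_{j_l}=0$, and the remaining $a^*_j \in (a_{j_m}: m\neq l)$; so Lemma~\ref{lem:pscoeff} gives $c(g^{*,l})=(a_{j_m}:m\neq l)$ with $\mu(c(g^{*,l}))=k-1$. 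Applying the inductive hypothesis to $g^{*,l}$, multiplying by $c(f)$, and using $c(fg^{*,l})\subseteq c(fg)+a_{j_l}c(f)$ (from $fg=fg^{*,l}+a_{j_l}fw_l$) yields the per-$l$ inclusion $c(f)^k c(g) \subseteq c(f)^{k-1}c(fg)+a_{j_l}c(f)^k$.

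The final step combines these $k$ per-$l$ inclusions into the full equality $c(f)^kc(g)=c(f)^{k-1}c(fg)$. This is carried out via Lemma~\ref{lem:ginduction}, applied with $I=c(g)$, generators $a_{j_l}$, and an appropriate colon ideal built from $c(f)^{k-1}c(fg):c(f)^k$, together with Lemma~\ref{lem:mingen} to absorb corrections in $\m c(g)$ that arise from the infinite tails hidden in the units $u_j$. The main obstacle is precisely this combining step: identifying the correct colon ideal to which Lemma~\ref{lem:ginduction} applies and verifying its hypotheses from the per-$l$ inclusions, which is where the power-series behavior of $g$ interacts most delicately with the Heinzer-Huneke induction.
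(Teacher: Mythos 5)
Your reduction to the local case, your base case $k=1$, and your decomposition $g=\sum_{l=1}^k a_{j_l}w_l$ with $c(w_l)=R$ and $c(g^{*,l})=(a_{j_m}:m\neq l)$ are all sound (and your handling of the exponent adjustment $c(f)^{k-\mu(c(g)_\m)}$ under localization is actually more explicit than necessary). But the proof has a genuine gap at exactly the point you flag: the $k$ per-$l$ inclusions $c(f)^kc(g)\subseteq c(f)^{k-1}c(fg)+a_{j_l}c(f)^k$ do not combine to give the theorem, and no choice of colon ideal fed into Lemma~\ref{lem:ginduction} is exhibited that makes them combine. Each per-$l$ inclusion only says that the quotient module $c(f)^kc(g)/c(f)^{k-1}c(fg)$ is generated by the image of $a_{j_l}c(f)^k$; having $k$ such statements, one for each $l$, does not force that module to vanish, and the symmetric treatment of the generators $a_{j_1},\dotsc,a_{j_k}$ gives you no foothold for Nakayama or for Lemma~\ref{lem:ginduction}. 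What is missing is a second induction, on $f$ itself.

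The paper's proof is a genuine double induction: besides the induction on $k$ (realized through a single distinguished summand $g=b_0u+g_1$ with $\mu(c(g_1))\leq k-1$, obtained via Lemma~\ref{lem:ginduction}), there is an inner induction on $\pdeg f$, writing $f=a_0v_0+Xf_1$. Two features of that setup are essential and absent from your outline. First, the peeled-off generator is not arbitrary: after dividing out $X^s$ and invoking Lemma~\ref{lem:mingen} to arrange $b_0\notin\m c(g)$, it is the \emph{lowest-order} coefficient $b_0$ that is extended to a minimal generating set; this is what makes the product $a_0b_0$ (of the two lowest-order coefficients) land in $c(fg)$, which is the seed that the rest of the argument grows from. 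Second, the generators $a_0^{\ell_0}\cdots a_n^{\ell_n}b_j$ of $c(f)^kc(g)$ are split into cases according to whether $\ell_0=0$ and whether $j=0$, and the cases are closed up using the inductive hypothesis on $\pdeg f_1$ together with two containments ($c(fg_1)\subseteq c(fg)+b_0c(f_1)$ and $c(f_1g)\subseteq c(fg)+a_0c(g_1)$) that intertwine the two decompositions. Your per-$l$ inclusions are the analogue of only the ``$j>0$'' half of this bookkeeping; without decomposing $f$ as well, the error term $a_{j_l}c(f)^k$ cannot be absorbed. To repair the proof you should introduce the induction on $\pdeg f$ and single out $b_0$ as the distinguished generator rather than treating all $k$ generators symmetrically.
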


\begin{proof}
First, note that we may immediately reduce to the case where $(R,\m)$ is local, and moreover in this case we may assume that $g\neq 0$ and $k=\mu(c(g))$.

The proof proceeds by induction on $k$ for Equation~\ref{eqn:ceq}.  If $k=1$, then $c(g) = (a)$ for some $0 \neq a\in R$, whence $g = ah$ for some $h \in R[\![X]\!]$.  Then $(a)=c(g) = c(ah) = ac(h)$, so for some $t\in c(h)$, we have $a(1-t)=0$.  Since $a\neq 0$, it follows that $1-t$ cannot be a unit, whence $t\notin \m$, so that $c(h) = R$.  Then $c(fg) = c(afh) = ac(fh) = ac(f) $ (by Proposition~\ref{pr:pssemi}) $= c(f)c(g)$.  So from now on we fix a $g$ with $\mu(c(g)) =k\geq 2$, and we may assume that we have proved the result for all $f$ when $k$ is smaller.

Now, write $g = X^s \cdot \sum_{i=0}^m b_i u_i X^i$, where $b_0 \neq 0$, each $b_i \in R$, and each $u_i$ is a unit of $R[\![X]\!]$.  By repeated use of Lemma~\ref{lem:mingen}, we may assume that $b_0 \notin \m c(g)$.  Next, we may divide by $X^s$ and assume that $s=0$.  On the other hand, Lemma~\ref{lem:pscoeff} gives that $c(g) = (b_0, \dotsc, b_m)$.  Thus, there is some minimal system of generators $c_1, \dotsc, c_k$ of $c(g)$ such that $c_k = b_0$.  For each $1\leq i \leq m$, we may write $b_i = \sum_{h=1}^k \lambda_{ih} c_h$, where each $\lambda_{ih} \in R$.  Collecting coefficients, we have \[
g = b_0 u + g_1,
\]
where $u= u_0 + \sum_{i=1}^m \lambda_{ik}  u_i X^i$ and $g_1 = \sum_{i=1}^m (\sum_{h=1}^{k-1} \lambda_{ih} c_h) u_i X^i$.  Observe that $u$ is a unit of $R[\![X]\!]$.  Also, $c(g_1) \subseteq (c_1, \dotsc, c_{k-1})$ and we have \[
c(g) \subseteq c(b_0 u) + c(g_1) = (c_k)+ c(g_1) \subseteq (c_k) + (c_1, \dotsc, c_{k-1}) = c(g),
\]
whence $c(g_1) + (c_k) = c(g)$.  At this point, Lemma~\ref{lem:ginduction} applies, so that $c(g_1) = (c_1, \dotsc, c_{k-1})$.  In particular, $\mu(c(g_1)) \leq k-1$.

Now we begin an induction on $n=\pdeg f$ for Equation~\ref{eqn:ceq} (keeping in mind that our $g$ is fixed until the end of the proof).  If $n=0$, then $f=au$ for some $a\in R$ and some unit $u$ of $R[\![X]\!]$, and we have $c(fg) = c(au g) = ac(ug) = ac(g)$ (by Proposition~\ref{pr:pssemi}) $= c(f) c(g)$, so that $c(f)^k c(g) = c(f)^{k-1} c(f)c(g) = c(f)^{k-1} c(fg)$.  So now we fix an $f$ with $\pdeg f = n \geq 1$, and assume inductively that we have proved the equation for all $f$s with smaller pseudodegrees (for our fixed $g$).

In particular, write $f = \sum_{i=0}^n a_i v_i X^i$ ($a_i \in R$, $v_i$ units of $R[\![X]\!]$) and set $f_1 := (f-a_0 v_0)/X= \sum_{i=0}^{n-1} a_{i+1} v_{i+1} X^i$, where each $a_i \in R$ and each $v_i$ is a unit of $R[\![X]\!]$.  Note that $a_0 b_0 \in c(fg)$.

Since the pseudodegree of $f_1$ is at most $n-1$, by the inductive hypothesis on the pseudodegree of $f$, we have \[
c(f_1)^k c(g) = c(f_1)^{k-1} c(f_1g).
\]

\noindent \textit{Claim 1}: $c(fg_1) \subseteq c(fg) + b_0 c(f_1)$.

\begin{proof}[Proof of claim]
\begin{align*}
c(fg_1) &= c(fg-b_0 fu) \subseteq c(fg) + b_0 c(fu) \subseteq c(fg) + b_0 c(f) \\
&= c(fg) + b_0 c(a_0 v_0 + Xf_1) \subseteq c(fg) + a_0 b_0 R + b_0 c(Xf_1) \\
&= c(fg) + b_0 c(f_1).
\end{align*}
\end{proof}

\noindent \textit{Claim 2:} $c(f_1g) \subseteq c(fg) + a_0 c(g_1)$.

\begin{proof}[Proof of claim]
\begin{align*}
c(f_1g) &= c(((f-a_0v_0)/X)g) = c((f-a_0v_0)g) \subseteq c(fg) + a_0c(g) \\
&\subseteq c(fg) + a_0 c(b_0u + g_1) \subseteq c(fg) + a_0 b_0 R + a_0 c(g_1)= c(fg) + a_0 c(g_1).
\end{align*}
\end{proof}

By Lemma~\ref{lem:pscoeff}, the ideal $c(f)^{k} c(g)$ is generated by all terms of the form \[
\omega = a_0^{\ell_0} a_1^{\ell_1} \cdots a_n^{\ell_n} b_j,
\]
where $\sum_{i=0}^n \ell_i = k$ and $0 \leq j \leq m$.  So to prove Equation (\ref{eqn:ceq}), it suffices to show that any such term is an element of $c(f)^{k-1} c(fg)$.  

\vspace{3pt}
\noindent \textit{Case 1}: Suppose $\ell_0\neq 0$ and $j=0$.   Then \[
\alpha = (a_0^{\ell_0-1} a_1^{\ell_1}\cdots a_n^{\ell_n}) \cdot a_0 b_0 \in c(f)^{k-1} \cdot c(fg).
\]

\noindent \textit{Case 2:} Suppose $\ell_0 \neq 0$ and $j>0$.  Then \[
\alpha = (a_0^{\ell_0-1} a_1^{\ell_1} \cdots a_n^{\ell_n}) \cdot a_0 b_j \in c(f)^{k-1} a_0 c(g_1).
\]

\noindent \textit{Case 3:} Suppose $\ell_0=0$.  Then $\alpha \in c(f_1)^{k} c(g) = c(f_1)^{k-1} c(f_1 g)$ by induction on the pseudodegree of $f$.
\vspace{3pt}

Combining these cases together, we have \begin{align*}
c(f)^{k}c(g) &\subseteq c(f)^{k-1} c(fg) + c(f)^{k-1} a_0 c(g_1) + c(f_1)^{k-1} c(f_1 g) \\
&\subseteq c(f)^{k-1} c(fg) + c(f)^{k-1} a_0 c(g_1) + c(f_1)^{k-1} (c(fg) + a_0 c(g_1)) \\
&\subseteq c(f)^{k-1} c(fg) + a_0 c(f)^{k-1} c(g_1),
\end{align*}
where the second containment is by Claim 2 and the third follows from the fact that $c(f_1) \subseteq c(f)$ (by Lemma~\ref{lem:pscoeff}).  On the other hand, since $\mu(c(g_1)) \leq k-1$, the induction hypothesis on $k$ gives that $c(f)^{k-1} c(g_1) = c(f)^{k-2} c(fg_1)$.  Hence, the sequence of containments and equalities above continues as follows: \begin{align*}
\cdots &= c(f)^{k-1} c(fg) + a_0 c(f)^{k-2} c(fg_1) \\
&\subseteq c(f)^{k-1} c(fg) + a_0 c(f)^{k-2} (c(fg) + b_0 c(f_1)) \\
&\subseteq c(f)^{k-1} c(fg),
\end{align*}
where the first containment follows from Claim 1, and the second follows from the containments $a_0 \in c(f)$, $a_0 b_0 \in c(fg)$, and $c(f_1) \subseteq c(f)$.
\end{proof}

Recall that for a pair of ideals $J \subseteq I$, we say that $J$ is a \emph{reduction} of $I$ if there is some $r\in \N$ such that $I^{r+1} = JI^r$.  Hence, our theorem implies the following:

\begin{cor}
Let $R$ be a Noetherian ring, and $f, g\in R[\![X]\!]$.  Then $c(fg)$ is a reduction of $c(f)c(g)$, with reduction number at most $k-1$, where $k$ is as in the theorem.
\end{cor}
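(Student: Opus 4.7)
The plan is very short: the corollary is essentially a reformulation of Theorem~\ref{mainthm}. I would first note the standard inclusion $c(fg) \subseteq c(f)c(g)$ (which already appeared in the proof of Proposition~\ref{pr:pssemi} and follows directly from expanding the product $fg$), so that $c(fg)$ is indeed a subideal of $c(f)c(g)$ and it makes sense to ask whether it is a reduction.

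Next, I would take the equation
\[
c(f)^k c(g) = c(f)^{k-1} c(fg)
\]
from Theorem~\ref{mainthm} and multiply both sides by $c(g)^{k-1}$. This yields
\[
\bigl(c(f)c(g)\bigr)^k = c(f)^k c(g)^k = c(f)^{k-1} c(g)^{k-1} c(fg) = \bigl(c(f)c(g)\bigr)^{k-1} c(fg),
\]
which is exactly the defining identity $I^{k} = J I^{k-1}$ for $J = c(fg)$ to be a reduction of $I = c(f)c(g)$, with reduction number at most $k-1$.

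There is no real obstacle here; the only subtlety worth flagging is to make sure the reduction number is defined in the form $I^{r+1} = J I^r$ (so that $r = k-1$ is the relevant exponent), which matches the definition stated in the paper just before the corollary. Everything else is an immediate algebraic consequence of the main theorem.
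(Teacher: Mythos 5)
Your proposal is correct and follows exactly the paper's own argument: note the elementary inclusion $c(fg)\subseteq c(f)c(g)$, then multiply Equation~\ref{eqn:ceq} by $c(g)^{k-1}$ to obtain $(c(f)c(g))^k = c(fg)\cdot(c(f)c(g))^{k-1}$. Nothing to add.
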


\begin{proof}
It is elementary that $c(fg) \subseteq c(f)c(g)$.  On the other hand, multiply Equation~\ref{eqn:ceq} by the quantity $c(g)^{k-1}$, and we get \[
(c(f)c(g))^k = c(f)^k c(g) c(g)^{k-1} = c(f)^{k-1}c(fg) c(g)^{k-1} = c(fg) \cdot(c(f)c(g))^{k-1}.
\]
\end{proof}

\section{A generalization to modules}
All of the above can be done with modules.  That is, let $R$ be a Noetherian ring, let $M$ be a finitely generated module, and let $g\in M[\![X]\!]$.  Then $c(g)$, the content of $g$, is defined to be the \emph{submodule} of $M$ generated by its coefficients.  As $M$ is a Noetherian module, $c(g)$ will be a finitely generated submodule.  We have the following theorem 

\begin{thm}
Let $R$ be a Noetherian ring, $M$ a finitely generated module, $f\in R[\![X]\!]$, and $g \in M[\![X]\!]$.  Let $k$ be the maximum among the numbers $\mu(c(g)_\m)$, where $\m$ ranges over all maximal ideals of $R$.  Then \[
c(f)^k c(g) = c(f)^{k-1}c(fg).
\]
\end{thm}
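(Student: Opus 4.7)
The plan is to reduce this module-valued statement to Theorem~\ref{mainthm} (the ring version) by passing to the trivial extension $R' := R \ltimes M$, whose underlying abelian group is $R \oplus M$ equipped with the multiplication $(r_1,m_1)(r_2,m_2) := (r_1 r_2,\, r_1 m_2 + r_2 m_1)$.  Since $R$ is Noetherian and $M$ is finitely generated, $R'$ is Noetherian as an $R$-module and hence as a ring.  The subset $0 \oplus M$ is an ideal of $R'$ with $(0 \oplus M)^2 = 0$, so it lies in the Jacobson radical; consequently, the maximal ideals of $R'$ are exactly $\n_\m := \m \oplus M$ for $\m \in \Max(R)$.  The theorem is trivial when $g=0$, so I assume $g\neq 0$.

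I embed the players into $R'[\![X]\!]$ by setting $F := (f, 0)$ and $G := (0, g)$, so that $FG = (0, fg)$.  Writing $C(\cdot)$ for the content of an element of $R'[\![X]\!]$ (an ideal of $R'$) to distinguish it from the module content $c(\cdot)$, a direct calculation using $(0 \oplus M)^2 = 0$ gives
\[
C(F) = c(f) \oplus c(f) M, \qquad C(G) = 0 \oplus c(g), \qquad C(FG) = 0 \oplus c(fg),
\]
and an easy induction on $j \ge 1$ yields $C(F)^j = c(f)^j \oplus c(f)^j M$.  Multiplying these out, once again using $M^2=0$, one obtains
\[
C(F)^k\, C(G) = 0 \oplus c(f)^k c(g) \qquad \text{and} \qquad C(F)^{k-1}\, C(FG) = 0 \oplus c(f)^{k-1} c(fg),
\]
so the module equality demanded by the theorem is precisely the $M$-component of the corresponding ideal equality in $R'$.

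It remains to match the minimal generator counts.  For $\n_\m \in \Max(R')$, the identity $\n_\m \cdot (0 \oplus c(g)) = 0 \oplus \m\, c(g)$ produces a canonical $R/\m$-isomorphism $C(G)_{\n_\m} / \n_\m\, C(G)_{\n_\m} \cong c(g)_\m / \m\, c(g)_\m$, so $\mu(C(G)_{\n_\m}) = \mu(c(g)_\m)$ for every maximal $\m$.  Thus the $k$ of the hypothesis is exactly the exponent prescribed by Theorem~\ref{mainthm} applied to $F, G \in R'[\![X]\!]$, and that theorem delivers the desired equation upon reading off the $M$-components.  The bulk of the work lies in the content computations above, but each is immediate from the relation $M \cdot M = 0$ in $R'$; I do not anticipate any serious obstacle beyond this bookkeeping.
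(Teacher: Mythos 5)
Your reduction is correct, and it is a genuinely different route from the one the paper takes. The paper's proof of the module version is simply ``run the proof of Theorem~\ref{mainthm} again with submodules in place of ideals,'' the only nontrivial point being that one needs a module analogue of Brewer's theorem that $Q R[\![X]\!]$ is primary when $Q$ is primary, in order to get the module analogue of Proposition~\ref{pr:pssemi}. You instead pass to the Nagata idealization $R' = R \ltimes M$ and invoke the ring-version theorem once, already proved. All the individual steps check out: $R'$ is Noetherian because it is a finitely generated $R$-module (ideals of $R'$ are $R$-submodules, so ACC descends); $0\oplus M$ is nilpotent, hence the maximal ideals of $R'$ are exactly the $\m\oplus M$; the content computations $C(F)^j = c(f)^j\oplus c(f)^jM$, $C(G)=0\oplus c(g)$, $C(FG)=0\oplus c(fg)$ follow from $(0\oplus M)^2=0$; and the identification $\mu(C(G)_{\n_\m})=\mu(c(g)_\m)$ via $C(G)/\n_\m C(G)\cong c(g)/\m c(g)$ is right (both sides compute the dimension of the same residue vector space, since a module killed by $\m$ is unchanged by localization at $\m$). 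What your approach buys is that the module statement becomes a formal corollary: in particular you never need to develop primary decomposition for submodules of power series modules, which is the one piece of genuinely new input the paper's sketch requires. What the paper's approach buys is only that it stays inside the original framework; as a complete argument yours is the more economical of the two.
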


The proof is then \emph{exactly} like the proof of Theorem~\ref{mainthm}.  Of course, in order to prove the analogue of the crucial Proposition~\ref{pr:pssemi}, one needs an analogue of \cite[Theorem 8]{Br-psbook}.  But in fact, one may just copy Brewer's proof, using the standard translation between primary ideals and primary submodules.

\section{Examples and a question}

For general commutative rings, there is no exponent for which the content formula holds in power series rings (at least for the notion of content used here), as shown in the following counterexample.
\begin{example}
Let $F$ be a field, let $a_0, a_1, \ldots$ and $b_0, b_1, \ldots$ be two sequences of indeterminates over $F$.  Let $R = F[a_0, a_1, \ldots, b_0, b_1, \ldots]$, let $S = R[\![X]\!]$, and consider the power series $f=\sum_{i=0}^\infty a_i X^i$ and $g=\sum_{i=0}^\infty b_i X^i$ in $S$.

Suppose that there is some $k\in \N$ such that \[
c(f)^k c(g) = c(f)^{k-1}c(fg).
\]
Let $R' := F[a_0, \ldots, a_k, b_0, \ldots, b_k]$, and define $\pi: R \twoheadrightarrow R'$ by sending each $a_i$ (resp $b_i$) to itself whenever $i \leq k$, and to $0$ otherwise.  Let $f' := \pi(f) = \sum_{i=0}^k a_i X^i \in R'[X]$ and $g' := \pi(g) = \sum_{i=0}^k b_i X^i$.  Note that $\pi(c(f)) = c(f')$, $\pi(c(g)) = c(g')$, and $\pi(c(fg)) = c(f'g')$.

According to \cite[Theorem 2.1]{CVV-Gauss}, we have that \[
(c(f')c(g'))^k \neq (c(f')c(g'))^{k-1} c(f'g').
\]
It follows that $c(f')^k c(g') \neq c(f')^{k-1} c(f'g')$.  But then an application of $\pi$ to the first displayed equation yields a contradiction.
\end{example}

Note that in the above example, the minimal number of generators of $c(g)$ localized at the homogeneous maximal ideal is infinite.  Hence, there is no natural choice of exponent, so it stands to reason that a counterexample would come from such a situation.  Accordingly, we ask the following question.

\vspace{3pt}

\noindent \textbf{Question:} Let $R$ be a commutative ring and $f, g\in R[\![X]\!]$.  Let $k$ be the supremum of the numbers $\mu(c(g)_\m)$, where $\m$ is taken over all maximal ideals of $R$, and \emph{suppose that $k<\infty$}.  Then does Equation~\ref{eqn:ceq} hold?

\vspace{3pt}

On the other hand, said formula does hold for \emph{some} non-Noetherian rings, even in some cases where the contents are not locally finitely generated.  Indeed, Anderson and Kang \cite[Theorem 2.4 and Corollary 2.6]{AnKa-content} have shown that if $R$ is a valuation ring whose value group $G$ is a subgroup of the real numbers (or more generally, a Pr\"ufer domain of dimension at most 1), then for any $f, g\in R[\![X]\!]$, one has $c(f)c(g)=c(fg)$, which is Equation~\ref{eqn:ceq} with $k=1$.  When $G \ncong \Z$ or $0$, it is easy over any such valuation ring to create power series with infinitely generated contents.

\begin{example}
Recall from the introduction that Rush had claimed (where for convenience, we switch the roles of $f$ and $g$ from that in his article) that if $R=k[u,v]$ ($k$ a field), $f=v+X$, and $g= u+vX(\sum_{i=0}^\infty X^i)$, then $c(f) = R$ but $c(fg) \neq c(g)$, and hence that Theorem~\ref{mainthm} cannot hold for this choice of $R$ \cite[p. 331]{Ru-content}.  Namely, he notes that $c(fg) = (uv, u+v^2, v+v^2)$ and $c(g) = (u,v)$, and then he claims that $c(fg) \neq c(g)$.  But in fact, $c(fg) = c(g)$, as can be seen by the equations \[
v=(-1)\cdot uv + v \cdot (u+v^2) + (1-v) \cdot (v+v^2)
\]
and \[
u-v = (u+v^2) - (v+v^2).
\]
\end{example}

\providecommand{\bysame}{\leavevmode\hbox to3em{\hrulefill}\thinspace}
\providecommand{\MR}{\relax\ifhmode\unskip\space\fi MR }
\providecommand{\MRhref}[2]{%
  \href{http://www.ams.org/mathscinet-getitem?mr=#1}{#2}
}
\providecommand{\href}[2]{#2}

\end{document}